\numberwithin{equation}{section}
\newtheorem{defn}{Definition}[section]
\newtheorem{theorem}{Theorem}[section]
\newtheorem{lemma}{Lemma}[section]
\newtheorem{corollary}[theorem]{Corollary}
\theoremstyle{definition}
\newtheorem{example}[theorem]{Example}
\newtheorem{remark}[theorem]{REMARK}
\def \begineq{\begin{equation}}
\def \endeq{\end{equation}}
\def \bb{\mathbb}
\def \CC{{\bb{C}}}
\def \CP{{\bb{CP}}}
\def \RR{{\bb{R}}}
\def \TT{{\bb{T}}}
\def \ZZ{{\bb{Z}}}
\def \({\left(}
\def \){\right)}
\def \<{\langle}
\def \>{\rangle}
\def \bar{\overline}
\begin{document}

\title{ A class of torus manifolds with nonconvex orbit space}
 %MSC classification
 %53C15; 53D20

\author[M. Poddar]{Mainak Poddar}

\address{Departamento de
Matem\'aticas, Universidad de los Andes, Bogota, Colombia}

\email{mainakp@gmail.com}

\author[S. Sarkar]{Soumen Sarkar}

\address{Department of Mathematics, Korea Advanced Institute of Science
and Technology, Daejeon, Republic of Korea}

\email{soumensarkar20@gmail.com}

\subjclass[2000]{57R17, 57R91}

\keywords{almost complex, symplectic, Hirzebruch genus, moment
angle complex, torus action}

\thanks{}

\abstract  We study a class of smooth torus manifolds whose orbit
space has the combinatorial structure of a simple polytope with
holes. We construct moment angle manifolds for such polytopes with
holes and use them to prove that the associated torus manifolds
admit stable almost complex structure. We give a combinatorial
formula for the Hirzebruch $\chi_y$ genus of these torus
manifolds. We show that they have (invariant) almost complex
structure if they admit positive omniorientation. We give examples
of almost complex manifolds that do not admit a complex structure.
When the dimension is four, we calculate the homology groups and
describe a method for computing the cohomology ring.

\endabstract

\maketitle

\section{Introduction}\label{intro}

The moment polytope of the Hamiltonian action of the real torus
$\TT^n$ on a smooth projective toric variety (toric manifold) may
be identified with the orbit space of the action. The moment
 polytope (Delzant polytope) is rather rigid with severe integrality
constraints, see Definition 2.1.1 of \cite{Sil}. In 1991 Davis and
Januskiewicz \cite{DJ} introduced a generalization of toric
manifolds, now known as quasitoric manifolds, which may be
obtained as identification spaces of $ \TT^n \times  P$ where $P$
is a simple $n$-dimensional polytope. In general these spaces do
not have algebraic or invariant symplectic structure, but they
still have a lot of remarkable properties; see the survey
\cite{BP}. In this article we study a class of even dimensional
manifolds which may be obtained as identification space of $\TT^n
\times P$ where $P$ is not convex, but a simple polytope with
holes which are also simple polytopes. In \cite{Mas} and
\cite{HM}, Masuda and Hattori introduced the notion of torus
manifold (see Definition \ref{def:tm}). Our manifolds are a
special class of torus manifolds. As in the case of quasitoric
manifolds, the torus action on our manifolds is {\it locally
standard}, i.e. locally equivalent to the natural action, up to
automorphism, of $U(1)^n$ on $\CC^n$.

 We describe the combinatorial construction of these manifolds in
 section \ref{def}. However,
these manifolds are also obtained by gluing quasitoric manifolds
along deleted neighborhoods of principal torus orbits (Lemma
\ref{smooth}). We refer to this as the fiber sum construction. It is a
special case of a more general construction in \cite{GK}.
 This is used to endow the manifolds with smooth structure (Lemma
\ref{smooth}), and in certain cases with almost complex structure
(Theorem \ref{acs}).

 We realize each of our manifolds as the quotient of a submanifold of $\CC^m$
  by the free action of a compact torus in section \ref{mac}.
  This may be viewed
 as a topological analogue of the construction of toric manifolds by
 symplectic reduction, or of the quotient construction of toric varieties.
  We use this to endow our
  manifolds with a stable complex structure (Lemma \ref{stable}).
Using it, we give a combinatorial formula for the $\chi_y$ genus
of these manifolds (Theorem \ref{chiy}) following the work of
Panov \cite{Pan} in quasitoric case. The formula also follows from
Lemma \ref{stable} and  a more general result in \cite{HM}.

 Our manifolds admit almost
complex structure if they admit a positive omniorientation (Lemma
\ref{eqiacs} and Theorem \ref{acs}). Positive omniorientation is
also a necessary condition if we require the almost complex
structure to be $\TT^n$-invariant.

 These
 manifolds cannot admit an invariant symplectic structure (Lemma \ref{symp1}) or
 an invariant integrable complex structure (see \cite{IK}) if
the orbit space has at least one hole. It would be interesting to
know if any of these torus manifolds admit a
 symplectic or complex structure.
 If the orbit space has
one hole, then the manifold can not be Kahler (Lemma
\ref{kahler}). We give examples of almost complex manifolds that
do not admit a complex structure in section \ref{iq}.

A lot is known about the topological invariants of these manifolds
from the works \cite{Mas} and \cite{HM}. However as they  have
nontrivial homology in odd degrees (see Theorem 9.3 of \cite{MP}),
the formula for the cohomology ring given in Corollary 7.8 of
\cite{MP} does not hold when the orbit space has holes. Even
explicit formulas for their (co)homology groups are not known in
general. In section \ref{homology}, we give a combinatorial
formula for the homology groups when dimension is four. We also
describe a method for computing the cohomology ring for the four
dimensional manifolds.

\section{Construction and smooth structure}\label{def}

\subsection{Polytope with holes}
A polytope is the convex hull of a finite set of points in
$\RR^n$. An $n$-dimensional polytope is said to be simple if every
vertex is the intersection of exactly $n$ codimension one faces.
 Let $P_0$ be an $n$-dimensional simple  polytope
in $\RR^n$. Let $ P_1, P_2, \ldots, P_s $ be a disjoint collection
of simple polytopes belonging to the interior of $P_0$. Let
 \begin{equation}\label{holes}
 P = P_0 - \bigcup_{k=1}^{s} P_k^{\circ}.
 \end{equation}
 We call $P$ an
$n$-dimensional $polytope ~ with ~ simple ~holes$. The polytopes
$P_1, P_2, \ldots , P_s$ are called holes of $P$.  The faces of $P$
are the faces of $ P_k, \, k = 0, \ldots, s $.
%In the following
%figure \ref{figeg01} the shaded regions represent polytopes with
%simple holes.
%
\begin{figure}[ht]
        \centerline{
           \scalebox{0.60}{
            \input{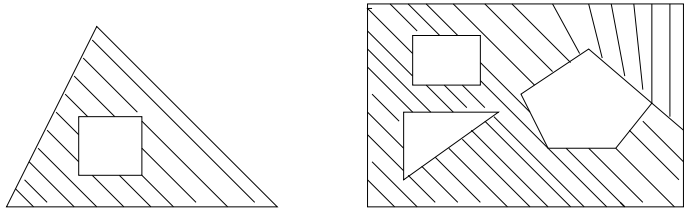}
            }
          }
       \caption {Polytopes with simple holes in $\RR^2$.}
       \label{figeg01}
      \end{figure}

\subsection{Combinatorial construction}
 Let $P$ be an $n$-dimensional simple  polytope with $s$ simple
holes. Let $ \mathcal{F}(P) = \{ F_1, F_2,\ldots, F_m \} $ be the
set of all codimension one faces ({\em facets}) of $P$. Note that
$\mathcal{F}(P) = \bigcup_{k=0}^{s} \mathcal{F}(P_k)$. Also, if
$F$ is a nonempty face of $P$ of codimension $k$ then $F$ is the
intersection of a unique collection of $k$ facets of $P$.
 The following definition is a
straightforward generalization of the notion of characteristic
function for a simple polytope, which is a crucial concept for
studying quasitoric manifolds \cite{DJ,BP}.

\begin{defn} A function
$ \lambda \colon \mathcal{F}(P) \rightarrow \ZZ^n $  is called a
 characteristic function if it satisfies the following
condition: Whenever $F = \bigcap_{j=1}^{k} F_{i_j} $ is a
$(n-k)$-dimensional face of $P$, the span of the vectors $ \lambda
(F_{i_1}), \lambda (F_{i_2}),\ldots, \lambda(F_{i_k}) $ is a
$k$-dimensional direct summand of $\ZZ^n$. We will denote
$\lambda(F_i)$ by $\lambda_i$ for simplicity and call it the
characteristic vector of $F_i$.
\end{defn}

For any face $F = \bigcap_{j=1}^{k} F_{i_j}$ of $P$, let $N(F)$ be
the submodule of $\ZZ^{n}$ generated by  $ \lambda_{i_1}, \ldots,
\lambda_{i_k} $.
 The module $ N(F) $ defines a sub-torus $G_F$ of $\TT^n = \ZZ^n
\otimes \RR /\ZZ^n =\RR^n/\ZZ^n $ as follows.
\begin{equation} G_F := (N(F) \otimes \RR) /N(F).
\end{equation}

Define an equivalence relation $ \sim $ on the product
space $ \TT^n \times P $ by
\begin{equation}\label{ereln}
 (t,x) \sim (u,y) ~\mbox{if}~  x=y ~\mbox{and} ~ u^{-1}t \in
 G_{F(x)}
\end{equation}
where $F(x)$ is the unique face of $P$ whose relative interior
contains $x$.

We denote the quotient space as follows.
 \begin{equation}\label{quo}
  M = M(P,\lambda) := (\TT^n \times P)/\sim.
\end{equation}

 The space $M$ is a $2n$-dimensional manifold. The proof of this
 is analogous to the quasitoric case \cite{DJ}.
The  $\TT^n$ action on $( \TT^n \times P )$ induces a natural
effective action of $\TT^n$ on $M$, which is locally standard (see
\cite{DJ}). Let $ \pi \colon M\rightarrow P $  be the projection
or orbit map defined by $ \pi ([ (t,x) ]) = x $.

\begin{defn}\label{def:tm}\cite{HM}  A closed, connected, oriented, smooth manifold $Y$ of
dimension $2n$ with an effective smooth action of $\TT^n$ with
non-empty fixed point set is called a torus manifold if a
preferred orientation is given for each characteristic
submanifold. A characteristic submanifold is, by definition, any
codimension two closed connected submanifold of $Y$, which is
fixed by some circle subgroup of $\TT^n$ and contains at least one
$\TT^n$-fixed point.
\end{defn}

In the case of $M$, the $\TT^n$-fixed point set corresponds
bijectively to the set of vertices of $P$. Observe that the spaces
$X_i := \pi^{-1}(F_i)$,  $i = 1, \ldots, m, $ are the
characteristic submanifolds of $M$. Each $X_i$ is a
$2(n-1)$-dimensional quasitoric manifold. We explain in section
\ref{omni}, how the characteristic function $\lambda$ endows each
$X_i$ with a preferred orientation.
 We say that $M$ is the torus manifold derived from
  the {\em characteristic pair} $(P, \lambda)$.

\subsection{Fiber sum construction}

\begin{lemma}\label{smooth}
The torus manifold $M(P, \lambda)$ is smooth and orientable.
\end{lemma}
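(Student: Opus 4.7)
The plan is to realize $M = M(P, \lambda)$ as a fiber sum of quasitoric manifolds and thereby inherit smoothness and orientability from each factor. For $k = 0, 1, \ldots, s$, let $M_k := M(P_k, \lambda|_{\mathcal{F}(P_k)})$. Since each $P_k$ is a simple polytope and the characteristic-function condition at every face of $P_k$ is inherited unchanged from the corresponding face of $P$, each $M_k$ is a quasitoric manifold in the sense of \cite{DJ}, hence a smooth oriented $2n$-manifold.

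For each $k \geq 1$, pick a principal $\TT^n$-orbit $\mathcal{O}_k \subset M_0$ lying over an interior point of $P_k$ and a principal orbit $\mathcal{O}_k' \subset M_k$ lying over an interior point of $P_k$. Each admits a $\TT^n$-equivariant closed tubular neighborhood diffeomorphic to $\TT^n \times D^n$ with boundary $\TT^n \times S^{n-1}$. Removing these open tubes from $M_0$ and from each $M_k$, and gluing the resulting boundaries pairwise by $\TT^n$-equivariant diffeomorphisms, produces a smooth $2n$-manifold $\widetilde{M}$ carrying a smooth $\TT^n$-action. A consistent orientation on $\widetilde{M}$ is then arranged by reversing the orientation on each $M_k$ for $k \geq 1$ if necessary, so that every gluing map is orientation-preserving, exactly as in the oriented connected sum of manifolds.

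It remains to identify $\widetilde{M}$ with $M(P,\lambda)$ as $\TT^n$-spaces. The orbit space of $\widetilde{M}$ is obtained from $P_0$ and the $P_k$ by deleting small open balls and gluing along the resulting $(n-1)$-spheres. Since the complement of a small open ball in a simple polytope is a collar on its boundary (both $P_0$ and each $P_k$ being topological balls), this glued orbit space is homeomorphic to $P_0 \setminus \bigcup_k P_k^\circ = P$ via a homeomorphism that is the identity on $\partial P_0$ and on each $\partial P_k$. Because the isotropy data attached to these boundaries agree with $\lambda$ by construction, this yields the desired equivariant homeomorphism $\widetilde{M} \cong M(P, \lambda)$. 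The main obstacle I foresee is precisely this last identification---carefully verifying that the orbit-space homeomorphism preserves the face stratification and that the gluing sign conventions are consistent across all $s$ fiber sums---after which smoothness and orientability follow immediately from the corresponding properties of the pieces and the collar nature of the gluing.
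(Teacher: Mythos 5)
Your proposal is correct and follows essentially the same route as the paper: realize $M(P,\lambda)$ as a fiber sum of the quasitoric manifolds $M_0,\ldots,M_s$ along principal orbits and transport smoothness and orientability from the pieces. The only cosmetic differences are that the paper removes just the orbit $L_k$ and glues the punctured tubes $U_k-L_k$ to each other by an explicit orientation-preserving radial inversion (rather than excising closed tubes and gluing boundary $\TT^n\times S^{n-1}$'s), and it handles several holes by induction; your acknowledged remaining step, identifying the glued orbit space with $P$, is treated at the same level of detail in the paper's Remark following the proof.
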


\begin{proof}
By induction it is sufficient to prove that $M(P, \lambda)$ has a smooth structure
  when $P$ is a polytope with one hole, that is,
 $P = P_0 - P_1^0$. Let $\mathcal{F}(P_0)$ and $\mathcal{F}(P_1)$
be the set of facets of $P_0$ and $P_1$ respectively.  The
restrictions $\lambda_0$ and $\lambda_1$ of $\lambda$ on
$\mathcal{F}(P_0) $ and $ \mathcal{F}(P_1)$ are characteristic
functions on $P_0$ and $P_1$ respectively. Let $M_0$ and $M_1$ be
the quasitoric manifolds associated to the characteristic pairs
$(P_0, \lambda_0)$ and $(P_1, \lambda_1)$ respectively. These
manifolds, being quasitoric, have smooth structure.
% (An explicit proof of this is given in \cite{Pod}
%following ideas in \cite{BR}.)

Let $\pi_k : M_k \to P_k, \; k=0,1$  be the orbit maps. Fix
points $x_k \in P_k^{\circ}$. Let
\begin{equation} L_k = \pi_k^{-1}(x_k).
\end{equation}

Let $U_k \subset M_k$ be a $\TT^n$ invariant neighborhood of $L_k$
such that
 \begin{equation}\label{bk} B_k :=\pi_k(U_k) \subset P_k
\end{equation}
  is diffeomorphic to an open ball in $\RR^n$.

The quasitoric manifolds $M_k$ are orientable. An orientation on
$M_k$ is determined by orientations on $\RR^n \supset P_k$ and
$\TT^{n}$. Suppose ${\bf p}= (p_1,\ldots, p_n)$ and ${\bf q}=
(q_1, \ldots, q_n)$ be the standard Cartesian and angular
coordinates on $\RR^n$ and $\TT^n$ respectively. Then the
orientation on $M_k$ corresponding to the ordering
$\frac{\partial}{\partial p_1}, \frac{\partial}{\partial q_1},
\ldots, \frac{\partial}{\partial p_n}, \frac{\partial}{\partial
q_n} $ will be assumed.
 %We fix an orientation on $\TT^{n}$ for once and for
%all, corresponding to the standard orientation on its Lie algebra.
%We also induce orientations on each $P_k$ from the standard
%orientation on $\RR^{n}$.

By \eqref{bk} there exist equivariant orientation
preserving diffeomorphisms
\begin{equation}\label{fk} f_k : U_k  \to \TT^n \times B,
\end{equation}
where $B$ is the unit $n$-ball centered at the origin.
 Denote the punctured unit $n$-ball, $B-\{0\}$, by $B^{-}$.
 %Let \begin{equation}B_k^{-} := \pi_k(U_k - L_k)= B_k - \{ x_k\}. \end{equation}
 Let
$|\cdot|$ be the Euclidean norm on $\RR^n$. Define
\begin{equation} r:= |{\bf p}| \quad {\rm and} \quad { \Theta}=(
\theta_1, \ldots, \theta_{n}) := \frac{{\bf p}}{r}.
\end{equation}

 The space $M(P, \lambda)$ can be obtained from $M_0 -
L_0$ and $M_1- L_1$ by identifying $U_0-L_0$ and $U_1 - L_1$ as
follows.
Let $ g : B^{-} \to B^{-}$
 be the orientation preserving involution,
\begin{equation}\label{g} g({\bf p}) = \frac{{1 -r}}{r} (p_1, \ldots,
p_{n-1}, -p_n).
\end{equation}
In other words, $  g(r, \Theta) = (1 -r, \theta_1,
\ldots, \theta_{n-1}, -\theta_{n} ) $.

 Define
\begin{equation}\label{h}
 h =f_0^{-1} \circ (Id \times g) \circ f_1.
\end{equation}
Identify $U_0-L_0$ with $U_1 - L_1$ by the orientation preserving
equivariant diffeomorphism $h$.
\end{proof}

\begin{remark}\label{orbitsp} Note that the map $g$ as used in \eqref{h}, radially inverts
a deleted neighborhood $B_1^{-}$ of the point $x_1$ in
$P_1^{\circ}$ and reflects it about the hyperplane $p_n=0$. Then
the map $h$ identifies it to a deleted neighborhood of $x_0$ in
$P_0^{\circ}$. Up to homeomorphism, we can widen the puncture at
$x_0$, and fit $P_1-B_1$ into it and thus recover our picture of
the orbit space of the glued manifold as the polytope with hole
$P_0 - P_1^{\circ}$. A smooth embedding of this orbit space into
Euclidean space is described in section \ref{mac}. We do not know
for sure if the smooth structure on the orbit space coincides with
the smooth structure of $P$ coming from its given embedding in
$\RR^n$.
\end{remark}

%
% $ g$ is orientation reversing since the Jacobian
% $dg =  r^{-3}  (1- r^2 )^{-1/2}  (r^2(1-r^2 ) I_n - PP^t)$
% where $P$ is the column vector with entries $(p_1, \ldots, p_n)$.
% The matrix $PP^t$ has eigenvalues $r^2$ and $0$ with algebraic
% multiplicity $1$ and $n-1$ respectively. Since the characteristic polynomial
% of $PP^t$ goes to $\infty$ and has nonzero slope at $r^2$, hence det(dg)
% must be negative. Note $0 < r < 1$. For the above calculation it is good to
% use $dr = \sum \frac{p_i}{r} dp_i$, and
% $d(\frac{p_i}{r} \sqrt{1-r^2}  ) = \frac{\sqrt{1-r^2}}{r} dp_i - \frac{p_i}{r^2\sqrt{1-r^2}} dr$
%
\begin{remark}\label{fibersum} We refer to the above gluing
construction as fiber sum construction because of its similarity
to the symplectic fiber sum construction (see \cite{Gro}, \cite{Gom}). A more general fiber sum
construction for spaces with torus action was introduced in
\cite{GK}.
\end{remark}

\begin{remark}\label{reasonforg}
 As we may observe from section \ref{mac}, the exact formula for the gluing map $g$ is
not important for the smooth structure.
\end{remark}

\begin{remark}\label{signofchar} The sign of the characteristic
vectors do not affect the equivariant diffeomorphism type of $M$.
This follows from similar observation for quasitoric manifolds,
see \cite{DJ, BR}.
\end{remark}

\subsection{Omniorientation}\label{omni}
We fix an orientation for $M(P, \lambda)$ as above by choosing
standard orientations on $\TT^n$ and $\RR^n$. Also each
characteristic submanifold $X_i$ is quasitoric and hence
orientable.

\begin{defn} An omniorientation  is an assignment of orientation for
$M(P, \lambda)$ as well as for each $X_i$. Given such an
assignment, we say that $M(P, \lambda)$ is omnioriented.
\end{defn}

Given the above choice of orientation for $M$, the characteristic
function $\lambda$  determines a natural omniorientation on $M$ as
follows: The  characteristic vector $\lambda_i$ determines a
fiberwise $S^1$ action on the normal bundle of $X_i$,
corresponding to the isotropy group $G_{F_i}$. This equips the
normal bundle with a complex structure and therefore an
orientation. This, together with the orientation on $M$, induces
an orientation on $X_i$. We will refer to this omniorientation as
the {\it characteristic omniorientation}.

Consider an omniorientation on $M$. Let $v \in M$ be a fixed point
of the $\TT^n$ action (or corresponding vertex of $P$). If the
orientation of $T_v(M)$ determined by the orientation on $M$ and
the orientations of characteristic submanifolds containing $v$
coincide then the {\it sign} $\sigma(v)$ is defined to be $1$,
otherwise $\sigma(v)$ is $-1$.
\begin{defn} An omniorientation is called positive if $\sigma(v) = 1$
for each fixed point $v$.\end{defn}

For the characteristic omniorientation, the sign of a vertex $v$
may be computed as follows \cite{BP}. Suppose $v= F_{i_1} \cap
\ldots \cap F_{i_n}$. To each codimension one face $F_{i_k}$
assign the unique edge $E_k$ such that $E_k \cap F_{i_k} = v$. Let
$e_k$ be a vector along $E_k$ with origin at $v$. Order (rename)
the $e_k$s so that $e_1, \ldots, e_n$ is a positively oriented
basis for $\RR^n$. Consider the corresponding matrix
$\Lambda_{(v)}= [\lambda_{i_1} \ldots \lambda_{i_n} ]$. Then
\begin{equation}\label{sign}
\sigma(v) = \det \Lambda_{(v)}.
\end{equation}

\begin{remark}\label{int}
It is also evident that the oriented intersection number of the
submanifolds $X_{i_1}, \ldots, X_{i_n}$ is $\sigma(v)$.
\end{remark}

\section{Calculations in dimension four}\label{homology}

 Let $\pi: M(P, \lambda) \to P$ be a
$4$-dimensional torus manifold, where $P$ is a polytope with $s$
simple holes. We give a $CW$ structure on $M(P, \lambda)$ and
compute the homology groups.

First assume that $P$ has only one hole. Then $P= P_0 - P_1^0$,
where $P_0$ and $P_1$ are simple $2$-dimensional  polytopes with
vertices $\{v_1, \ldots, v_{l_0}\}$ and $\{ u_1, \ldots,
u_{l_1}\}$ respectively.
 Assume
that $dist(v_1u_1) \leq dist(v_1u_j)$ for all $j= 1, \ldots, l_1$.
 Let $E_{v_i}$  and $E_{u_j}$
be the edges of $P$ joining the vertices $\{v_{i}, v_{i+1} \}$ and
$\{u_{j}, u_{j+1}\}$ respectively for $i= 1, \ldots, l_0;\, j= 1,
\ldots, l_1$. Here assume $v_{l_0+1} =v_1$ and $u_{l_1+1}=u_1$.
  Let $E_{v_1 u_1}$ be the line segment joining
$v_1$ and $u_1$.

We construct the $i$-th skeleton $X_i$ of $M(P, \lambda)$ as
follows. Let $X_0= \{v_1, \ldots, v_{l_0-1}, u_1, \ldots,$ $
u_{l_1}\}$.
 Define \begin{equation} \begin{array}{ll}
e^1_i=(\{(1,1)\}\times E_{v_i})/\sim & {\rm for} \; i=1, \ldots,
l_0-2\\
  e^1_{l_0-1}= (\{(1,1)\} \times E_{v_1u_1})/\sim & \\
e^1_{l_0+j-1} = (\{(1,1)\} \times E_{u_j})/\sim  & {\rm for}\;
j=1,
\ldots, l_1 \\
 X_1= \cup_{i=1}^{l_0 + l_1 -1} \bar{e^1_i}. & \\
\end{array}
\end{equation}
A picture of the $1$-skeleton for a polytope with one hole is
given in figure \ref{ega5} (a) on the next page. Define
\begin{equation} \begin{array}{ll}
 e^2_i= ((\TT^2 \times
E_{v_i})/\sim) - \bar{e^1_i} & {\rm for}\; i=1, \ldots, l_0-2  \\
 e^2_{l_0-1}= (( \{1\} \times S^1 \times E_{v_1u_1})/\sim )-
\bar{e^1_{l_0-1}} & \\
 e^2_{l_0}= ((S^1 \times \{1\} \times E_{v_1u_1})/\sim) -
 \bar{e^1_{l_0-1}} & \\
 e^2_{l_0+j} = ((\TT^2 \times E_{u_j})/\sim) - \bar{e^1_{l_0+j-1}} &
 {\rm for}\; j=1,  \ldots, l_1 \\
  X_2 = \cup_{i=1}^{l_0 + l_1} \bar{e^2_i}. & \\
\end{array}
\end{equation}
Define \begin{equation} \begin{array}{l} e^3 = ((\TT^2 \times
E_{v_1u_1})/\sim) - (\bar{e^2_{l_0 -1}} \cup \bar{e^2_{l_0}} )\\
  X_3 = \bar{e^3} \cup X_2. \end{array} \end{equation}
Define
\begin{equation}\label{u4} U^4 = P -\{E_{v_1} \cup \ldots \cup E_{v_{l_0-2}} \cup
\partial P_1 \cup E_{v_1u_1}\}.\end{equation}
 Clearly $U^4$ is homeomorphic to
$\RR_{\geq 0}^2$. So \begin{equation}(\TT^2 \times U^4)/\sim ~
\cong B^4 = \{x\in \RR^4 : |x| < 1\}.\end{equation}
 Define \begin{equation}
e^4 = (\TT^2 \times U^4)/\sim \; {\rm and} \; X_4 = \bar{e^4}
\end{equation}

For the above $CW$ structure, by reasons of either dimension or
orientation, the cellular boundary maps $d_2, d_3, d_4$  are zero.
Since $X_1$ is homotopic to a circle, we get the following result.
\begin{theorem}\label{homt4} Suppose $P$ is a $2$-polytope with
one hole. Then
$$H_i(M(P, \lambda), \ZZ) = \left\{ \begin{array}{ll}
 \displaystyle  \ZZ^{l_0 + l_1} & \mbox{if} ~ i = 2\\
 \ZZ & \mbox{if}~ i = 0, 1, 3, 4 \\
 0 & \mbox{if} ~ i > 4.
\end{array} \right.$$
\end{theorem}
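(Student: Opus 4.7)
The plan is to read the homology directly off the cellular chain complex of the CW decomposition constructed just before the statement. Counting cells gives $C_0\cong\ZZ^{l_0+l_1-1}$, $C_1\cong\ZZ^{l_0+l_1-1}$, $C_2\cong\ZZ^{l_0+l_1}$, $C_3\cong\ZZ$, and $C_4\cong\ZZ$. I will establish (i) $X_1$ has the homotopy type of $S^1$, and (ii) $d_2=d_3=d_4=0$; the claimed groups then follow by reading off $H_i(M)=\ker d_i/\operatorname{im} d_{i+1}$.

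For (i), the edges $e^1_1,\dots,e^1_{l_0-2}$ form a simple path from $v_1$ to $v_{l_0-1}$ and $e^1_{l_0-1}$ is an arc from $v_1$ to $u_1$; together they form a contractible subgraph that collapses to $v_1$. The remaining arcs $e^1_{l_0},\dots,e^1_{l_0+l_1-1}$ close up into a loop through $u_1,u_2,\dots,u_{l_1},u_1$ because $u_{l_1+1}=u_1$. Hence $X_1\simeq S^1$, giving $H_0(X_1)=H_1(X_1)=\ZZ$.

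The vanishing of $d_2$ follows because, for any facet $E$ of $P_0$ or $P_1$, the space $(\TT^2\times E)/\sim$ is a $2$-sphere with poles at the two vertices of $E$ (the circle fibre above the interior collapses to a point at each endpoint), and $(\{(1,1)\}\times E)/\sim$ is a meridian arc; the 2-cell is the complement of this meridian, whose attaching circle traces the meridian once forward and once backward, yielding zero cellular boundary. The same picture applies to $e^2_{l_0-1}$ and $e^2_{l_0}$, whose closures are the suspensions $\Sigma(\{1\}\times S^1)$ and $\Sigma(S^1\times\{1\})$ inside $(\TT^2\times E_{v_1u_1})/\sim$, each being an $S^2$ with $e^1_{l_0-1}$ as meridian. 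The vanishing of $d_4$ is immediate: by Lemma \ref{smooth} the manifold $M$ is closed, connected and oriented, so $H_4(M;\ZZ)=\ZZ$; since $C_4=\ZZ$, the map $d_4$ must be zero.

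The step I expect to require the most care is $d_3(e^3)=0$. The closure $\bar{e^3}$ equals the unreduced suspension $\Sigma\TT^2$, and the cells of $M$ lying in $\bar{e^3}$ — namely $v_1,u_1,e^1_{l_0-1},e^2_{l_0-1},e^2_{l_0},e^3$ — are precisely the cells obtained by suspending the standard CW structure on $\TT^2$ (one 0-cell at $(1,1)$, two 1-cells $\{1\}\times S^1$ and $S^1\times\{1\}$, one 2-cell). Since the cellular differentials of a suspension equal up to sign the desuspended differentials, and the 2-cell of $\TT^2$ has trivial cellular boundary, the suspended 3-cell $e^3$ does too; because $\bar{e^3}$ meets no other 2-cell of $M$, this gives $d_3=0$ in the full complex. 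Assembling the result: $H_4=\ker d_4=\ZZ$, $H_3=C_3/\operatorname{im}d_4=\ZZ$, $H_2=C_2/\operatorname{im}d_3=\ZZ^{l_0+l_1}$, $H_1=\ker d_1=H_1(X_1)=\ZZ$, and $H_0=\ZZ$, matching the theorem.
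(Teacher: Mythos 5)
Your proposal is correct and follows essentially the same route as the paper: the paper builds exactly this CW structure and then asserts in one sentence that $d_2, d_3, d_4$ vanish and that $X_1$ is homotopy equivalent to a circle, which is precisely what you verify. Your detailed justifications (the characteristic $2$-spheres with a meridian arc removed for $d_2$, the suspension $\Sigma\TT^2$ for $d_3$, and orientability for $d_4$) correctly fill in the paper's terse "by reasons of either dimension or orientation" and match its intended argument.
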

We can give a similar $CW$ structure on $M(P, \lambda)$ when $P$
is a $2$-polytope with multiple holes.  The figure \ref{ega5} (b)
gives a representation of the $1$-skeleton of such a structure
when there are two holes.
\begin{figure}[ht]
        \centerline{
           \scalebox{0.70}{
            \input{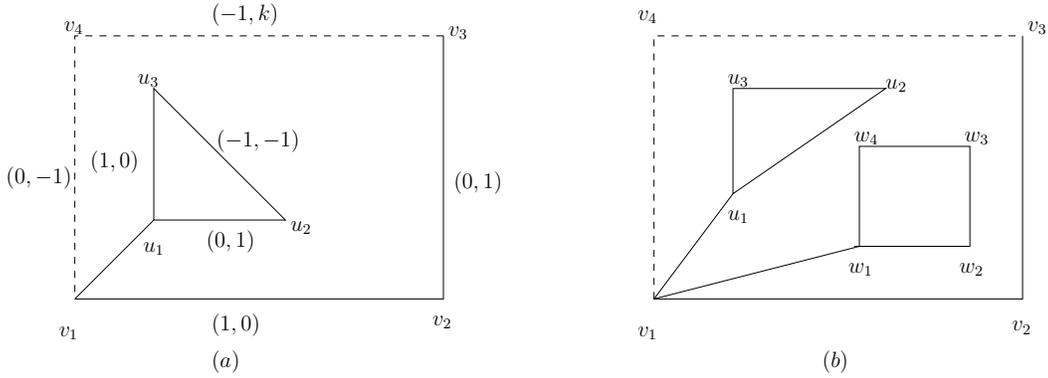}
            }
          }
       \caption {$1$-skeleta for $2$-polytopes with (a) one hole and (b) two holes.}
       \label{ega5}
     \end{figure}

\begin{corollary}\label{homt5}
 Suppose $P$ is a $2$-polytope with $m$ vertices and
$s$ simple holes. Then $$H_i(M(P, \lambda), \ZZ) = \left\{
\begin{array}{ll} \displaystyle
 \ZZ^{ m + 2s -2} ~
 & \mbox{if} ~ i = 2\\
 \displaystyle  \ZZ^s & \mbox{if}~ i = 1, 3,\\
\ZZ & \mbox{if}~ i = 0, 4\\
 0 & \mbox{if} ~ i > 4.
\end{array} \right.$$
\end{corollary}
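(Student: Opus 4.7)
The approach is to extend the explicit CW decomposition from the proof of Theorem~3.1 to the multi-hole setting. For each hole $P_k$, $k=1,\ldots,s$, I would choose a segment $E^{(k)}$ joining a vertex $u_1^{(k)}$ of $P_k$ to a point on $\partial P_0$ (or to an already-connected hole), taken so that the $E^{(k)}$ are pairwise disjoint, meet each $\partial P_j$ only at their endpoints, and have interiors in $P^\circ$. Cutting $P$ along these $s$ segments turns the $s$-holed polygon into a topological disk, so if $U^4 \subset P$ denotes the complement of all connecting segments, all $\partial P_k$ for $k\geq 1$, and all but two adjacent outer edges of $\partial P_0$, then $U^4 \cong \RR_{\geq 0}^2$ and $((\TT^2 \times U^4)/\sim)\cong B^4$.

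Next I would define cells in exact parallel with Theorem~3.1: $m-1$ zero-cells (the vertices of $P$ with one outer vertex removed); $m-2+s$ one-cells of the form $(\{(1,1)\} \times F)/\sim$, one for each used edge $F$ of $P$ and for each connecting segment $E^{(k)}$; $m-2+2s$ two-cells, namely one $2$-cell $(\TT^2\times F)/\sim$ minus its $1$-cell for each used edge $F$, together with a pair $(\{1\}\times S^1 \times E^{(k)})/\sim$ and $(S^1\times\{1\}\times E^{(k)})/\sim$ (each minus the connecting $1$-cell) per hole; $s$ three-cells $(\TT^2\times E^{(k)})/\sim$ minus their two flanking $2$-cells; and the single top $4$-cell $(\TT^2\times U^4)/\sim$. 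A quick Euler-characteristic check yields $\chi = (m-1)-(m-2+s)+(m-2+2s)-s+1=m$, matching the known number of $\TT^2$-fixed points of $M(P,\lambda)$.

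The $1$-skeleton $X_1$ is a connected finite graph with $\chi(X_1)=1-s$, hence homotopy equivalent to a wedge of $s$ circles, which gives $H_0(X_1)=\ZZ$ and $H_1(X_1)=\ZZ^s$. The higher cellular boundary maps $d_2,d_3,d_4$ all vanish, by the same dimension and orientation arguments as in the one-hole case: each edge $2$-cell's closure is a $2$-sphere $\pi^{-1}(F)$ whose attaching map traverses the associated $1$-cell with net degree zero; the closure of each connecting $3$-cell is the suspension $\Sigma\TT^2$ of the standard $aba^{-1}b^{-1}$ cell structure on the torus, attaching to its two flanking $2$-cells with cancelling orientations; and the top $4$-cell must attach trivially to the $3$-skeleton since $M(P,\lambda)$ is a closed orientable $4$-manifold, forcing $\ker d_4=C_4=\ZZ$. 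Reading off the cellular chain complex now yields the stated homology groups. I expect the main obstacle to lie in verifying that the $s$ connecting segments can always be chosen pairwise disjointly (so the CW structure is well-defined), together with the careful orientation bookkeeping needed to check that $d_3$ vanishes on each of the $s$ three-cells.
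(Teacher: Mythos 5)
Your proposal is correct and takes essentially the same route as the paper: the paper's justification of this corollary is simply the remark that the one-hole CW structure of Theorem \ref{homt4} extends to $s$ holes by adding one connecting segment per hole (with the $1$-skeleton pictured for $s=2$), and your cell counts, the identification of $X_1$ with a wedge of $s$ circles, and the vanishing of $d_2$, $d_3$, $d_4$ supply exactly the details left implicit there.
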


\subsection{Cohomology ring}\label{cohring}
Assume that $M$ has the characteristic omniorientation. In
dimension four it is possible to compute the cohomology ring by
using Poincar\'e duality and intersection product. To illustrate,
we consider the case  when there is one hole. Let $x_k \in H_2(M)$
denote the homology class of the sphere associated to the $2$-cell
$e^2_{k}$. Here characteristic orientation is chosen for the
sphere if $k \neq l_0 -1, l_0$. Otherwise orientation determined
by the direction $v_1 u_1$ and standard orientation of the
associated $S^1$ is assumed.

The products of two classes $x_i$ and $x_j$ when $i$ and $j$ are
both less than $l_0-1$, is the same as obtained by considering
them as classes in $H_{\ast}(M_0)$. This is because the homotopies
needed to achieve transversality can be done away from a
neighborhood of any given principal torus fiber. Similar remarks
apply when $i$ and $j$ both exceed $l_0$. If $i < l_0-1$ and $j >
l_0$, or vice versa, then the product is obviously zero.

 \begin{figure}[ht]
        \centerline{
           \scalebox{0.80}{
            \input{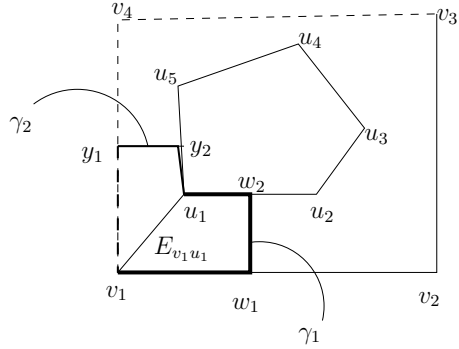}
            }
          }
       \caption {Homotopic copies of $x_{l_0-1}$, here $l_0 =5$ and $l_1=4$.}
       \label{figeg02}
      \end{figure}

 Now
consider the class $x_{l_0-1}$. To compute the self intersection $
x_{l_0-1}^2$, we choose two different homotopy representatives,
$S_1^2$ and $S_2^2$, of $x_{l_0-1}$ which intersect only at $v_1$
and $u_1$. Let $w_1, y_1$ be points in the relative interior of
the edges $v_1v_2$ and $v_{l_0}v_1$ respectively. Similarly let
$w_2, y_2$ be points in the relative interior of the edges
$u_1u_2$ and $u_{l_1}u_1$ respectively. Let $\gamma_1, \;
\gamma_2$ be the piecewise linear paths $v_1 w_1 w_2 u_1$ and
$v_1y_1y_2u_1$ respectively. Let $S_i^2$ be the homotopy sphere $
(\{1\} \times S^1 ) \times \gamma_i /\sim$. The circle subgroup $
\{1\} \times S^1 $ corresponds to the submodule of $\ZZ^2$
generated by $(0,1)$.
   It is possible to express $(0,1)$
uniquely as an integral linear combination $a_1 \lambda_{1} + a_2
\lambda_{l_0} $. Let $d =  \det [\lambda_{l_0}, \lambda_{1} ] =
\sigma(v_1) $. Near $v_1$, the sphere $S_1^2$ is homotopic to $a_2
d $ times the characteristic sphere over $v_1v_2$. Similarly
$S_2^2$ is homotopic to $ a_1 d$ times the characteristic sphere
over $v_1v_{l_0}$.
 Therefore the contribution of $v_1$ to $ x_{l_0-1}^2$ is $ (d)(a_1 d)
(a_2 d) = a_1 a_2 d$, see remark \ref{int}. The contribution from
the point $u_1$ may be calculated similarly. Other intersection
products of degree $2$ classes may be calculated by using similar
homotopies. For example, $x_1 \cdot x_{l_0-1} = (d)(a_1 d) =a_1 $.
Finally the intersection of the generators degree one and degree
three homology classes is $1$ up to sign.

\begin{example}
Consider $M$ to be the fiber sum of a Hirzebruch surface with
$\CP^2$ corresponding to figure \ref{ega5}(a).
 Let $x_1, \ldots, x_7$ be the generators of $H_2(M)$ as defined above.
  Let $y$ and $z$ be the
generators of $H_1(M)$ and $H_3(M)$ corresponding to the cells
$\sum_{j=4}^6 e^1_{j}$ and $e^3$ respectively. Then,
\begin{equation} \begin{array}{l}
 x_1^2= x_3^2= x_4^2=0, x_2^2=-k,  x_i^2= 1 \; {\rm if} \; i \ge 5, \\
 x_1x_3= x_2x_3 =x_2x_4=x_3x_6= x_3 x_7= x_4x_5= x_4x_6= 0, \\
x_ix_j=0\; {\rm if}\; i=1,2 \; {\rm and}\; j= 5, 6, 7, \\
x_1 x_2= x_1x_4= x_5 x_6= x_5x_7= x_6x_7 =1, x_3x_5 = x_4x_7= -1,
yz=1.
\end{array}
\end{equation}
\end{example}

\section{Moment angle manifold}\label{mac}

Let $M$ be the manifold obtained by fiber summing the smooth
quasitoric manifolds $M_i(P_i, \lambda_i)$. We may assume that
each $P_i$ lies in a distinct copy of $\RR^n$. Let $S_i$ be the
one point compactification of the copy of $\RR^n$ that contains
$P_i$, with standard smooth structure.

 The orbit space $O$ of $M$ inherits a smooth structure from the
gluing operations in Lemma \ref{smooth}. As noted in Remark
\ref{orbitsp}, $O$ is homeomorphic to $P$. Using the punctured
balls $B_k^{-}$ as tubes between different affine copies of
$\RR^n$, we may construct a smooth embedding of $O$ into
$\RR^{n+s}$, where $s$ is the number of holes of $P$. However, we
need more. Consider the manifold with corners $O^+$,  obtained by
gluing a punctured copy of each $S_i$, $1\le i \le s$, to $P_0$
punctured at $s$ points, according to the gluing maps in Lemma
\ref{smooth}.  Then $O^+$ is homeomorphic to $P_0$. We may
smoothly embed $O^+$ into $\RR^{n+s}$.

For notational simplicity we will describe the embedding in terms
of $P$ and the $P_i$'s. Induce smooth structures on $P$ and $P_0$
using the homeomorphisms with $O$ and $O^+$ respectively.
 Then there exists a smooth embedding $\psi_0$ of $P_0$ in
$\RR^{n+s}=\{(p_1, \ldots, p_{n+s}) \}$
 such that the following hold:
\begin{enumerate}
 \item The image of $P- \bigcup_{k=1}^s V_k$, where $V_k$ is a small neighborhood of $P_k$ in $\RR^n$,
   lies in $\RR^n =\{p_1, \ldots, p_n\}$.
 \item The image of $V_k$ lies in the $(n+1)$-dimensional subspace
$\{p_{n+j}=0 | 1\le j \neq k \le s \}$. The  projection of
$\psi_0(V_k)$ to $\RR^n$ lies inside $V_k$.
 \item The embedding $\psi_0$ is affine when restricted to the boundary $\partial P_k$.  The image of
 $\partial P_k$ lies in the affine subspace $H_k :=\{ p_{n+k}=1,  p_{n+j}=0\, \forall \,j \, {\rm such\; that}\, 1 \le j \neq k \le s\} $.
\item  $\psi_0(P_0) \bigcap H_K = \partial P_k$. \item The image
of $P$ lies between the affine subspaces $p_{n+k}=0$ and
$p_{n+k}=1$ for
 each  $1\le k \le s$.
\end{enumerate}

 Consider any facet $F_i$ of $P$. Suppose $F_i \subset P_k$ where $k
\ge 1$.  Choose a linear polynomial $A_i$ in the variables $p_1,
\ldots, p_n, p_{n+k} $, other than $ a_k :=1 -p_{n+k}$, which is
zero on $\psi_0(F_i)$ and positive on $ \psi_0(P) -\psi_0(F_i)$.
Define $d_i =A_i + a_k + \sum_{1 \le j\neq k} p_{n+j} $. If $F_i $
is a facet of $P_0$, then let $A_i$ be the defining linear
polynomial of $F_i$ in the variables $p_1, \ldots, p_n$ such that
$A_i$ is positive in the interior of $P_0$. In this case define
$d_i = A_i + \sum_{1\le j} p_{n+j}$.

 Then for a point $x$ in $\psi_0(P)$,
 $d_i(x)$ can be thought of as an $l_1$-distance of $x$ from the affine subspace of $\psi_0(F_i)$.
  We construct a smooth embedding $\psi_1$ of $\psi_0(P_0)$
into  $\RR^m$ by $\psi_1(x) = (d_1(x), \ldots, d_m(x))$ where $m=
|\mathcal{F}(P)|$. The composition $ \psi:= \psi_1 \circ \psi_0$
defines an embedding of $P_0$ into $\RR^m=\{(r_1, \ldots, r_m)\} $
such that the image of $P$ lies in $\RR^m_{\ge}= \{r_i \ge 0 \,
\forall \, i \}$.  Suppose $y \in \psi(P)$. Then $r_i(y)=0$ if and
only if $y \in \psi(F_i)$.

The space $\CC^m$ can be regarded as a quotient of $\TT^m \times
\RR^m_{\ge}$ by an equivalence relation $\sim_0$ as follows: Let
$u_1, \ldots, u_m$ denote the standard basis of $\ZZ^m$. Let $T_i$
denote the circle subgroup $(\ZZ u_i \otimes \RR)/ \ZZ u_i$ of
$\TT^m$.  For any face $F=\{r_j=0| j\in J \}$ of $\RR^m_{\ge}$, we
define the subgroup $T_F:= \prod_{j\in J} T_j$. For any $y$ in
$\RR^m_{\ge}$, let $F(y)$ denote the unique face of $\RR^m_{\ge}$
whose  relative interior contains $y$. Then define $\sim_0$ by
\begin{equation}\label{sim0}
(t,x) \sim_0 (u,y) ~\mbox{if}~  x=y ~\mbox{and} ~ u^{-1}t \in
 T_{F(y)}.
\end{equation}

\begin{defn} Let $\pi_0: \CC^m \to \RR^m$ denote the quotient map. Define
the moment angle complex $Z(P)$ of $P$ by $$ Z(P)=
\pi_0^{-1}(\psi(P)).$$
\end{defn}

 We may identify $\pi_0$ with the smooth map defined coordinate-wise
  by $z_i \mapsto |z_i|^2$. This shows that $Z(P)$ is smooth. The details
  are straightforward and left to the reader.

 Given a characteristic function $\lambda$ for $P$, let
$\Lambda: \ZZ^m \to \ZZ^n $ be the linear map defined by
$\Lambda(u_i) = \lambda_i$. Let $K= \ker \Lambda$ and $T_K =(K
\otimes \RR)/ K $. Then it is easy to observe that topologically
$Z(P)$ is a principal $T_K$ bundle over $M(P,\lambda)$.

 The leaf space $\mathcal{M}(P, \lambda)$ of the foliation corresponding
 to the
smooth and free action of $T_K$ on $Z(P)$ has a natural smooth
structure.  Since $\TT^m \cong T_K \times \TT^n$, it is not hard
to check that $\mathcal{M}(P, \lambda)$ supports a smooth action
of $\TT^n$. Moreover $\mathcal{M}(P, \lambda)$ is equivariantly
homeomorphic to $M(P,\lambda)$ with respect to this action.
 There is a one-to-one
correspondence between normal orbit types and, in fact, an
isomorphism of $\TT^n$-normal systems (see \cite{Dav}) of
$\mathcal{M}(P, \lambda)$ and $M(P, \lambda)$. (Here the smooth
structures on the orbit spaces match that of $O$.) All of these
may be ascertained by studying the local representations of
$Z(P)$, up to equivariant diffeomorphism, by $T_K \times \CC^k
\times (\CC^{\ast})^{n-k}  $ near the faces of $P$. Therefore by
Theorem 4.3 of \cite{Dav}, $\mathcal{M}(P, \lambda)$ and $M(P,
\lambda)$ are equivariantly diffeomorphic. We will henceforth
identify $M(P, \lambda)$ with $\mathcal{M}(P, \lambda)$ without
additional comments.

\section{Almost complex structure}

In this section we prove three results: i) That every
omniorientation of $M$ determines a stable almost complex
structure on it, ii) that
 if $M$  admits a positive omniorientation and $\dim(M)= 4$, then
 there exists an almost complex structure on $M$ which
  is equivalent to the associated stable complex structure,
   and iii) that
there exists a $\TT^{n}$-invariant almost complex structure on $M$
if and only if $M$ has a positive omniorientation. It is not known
to us if the invariant almost complex structure is equivalent to
the associated stable almost complex structure.

\begin{lemma}\label{stable} Every omniorientation of the torus
manifold $M(P, \lambda)$ determines a stable almost complex
structure on it.
\end{lemma}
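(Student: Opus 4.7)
The plan is to use the moment angle realization $M \cong Z(P)/T_K$ from Section \ref{mac} to exhibit the stable almost complex structure, following the Davis--Januszkiewicz strategy \cite{DJ}. The principal $T_K$-bundle $\pi : Z(P) \to M$ gives a $T_K$-equivariant decomposition $TZ(P) \cong \pi^{\ast}(TM) \oplus \underline{\mathfrak{t}_K}$, where $\underline{\mathfrak{t}_K}$ is the trivial vertical bundle of rank $m-n$ (trivial because $T_K$ is abelian). Combining with the normal bundle $\nu$ of the embedding $Z(P) \hookrightarrow \CC^m$, we obtain
$$\pi^{\ast}(TM) \oplus \underline{\mathfrak{t}_K} \oplus \nu \cong T\CC^m|_{Z(P)} \cong \underline{\CC^m}.$$

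Next I would pass to the quotient under the free $T_K$-action. Since $\CC^m$ splits as $\bigoplus_{i=1}^m \CC_{e_i}$ under the $\TT^m$-action, the trivial bundle $\underline{\CC^m}$ descends to $\bigoplus_{i=1}^m L_i$ on $M$, where $L_i := Z(P) \times_{T_K} \CC_{\chi_i}$ is the complex line bundle associated to the restriction $\chi_i$ of the character $e_i$ to $T_K$. The bundle $\underline{\mathfrak{t}_K}$ descends to the trivial real bundle $\underline{\RR^{m-n}}$ on $M$ (since the adjoint action of the abelian $T_K$ on its Lie algebra is trivial). A key observation is that $T_K$ acts trivially on fibers of $\nu$: in polar coordinates $z_i = r_i e^{i\theta_i}$ on $\CC^m$, the normal directions to $Z(P) = \pi_0^{-1}(\psi(P))$ lie in the span of the $\partial_{r_i}$, on which $T_K \subset \TT^m$ acts trivially (since it only rotates phases). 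Hence $\nu$ descends to $\bar{\nu} \cong \tilde{\pi}^{\ast}(\nu_0)$ on $M$, where $\nu_0$ is the normal bundle of $\psi(P) \subset \RR^m$ and $\tilde{\pi} : M \to P$ is the orbit map. The descended identity reads
$$TM \oplus \underline{\RR^{m-n}} \oplus \bar{\nu} \cong \bigoplus_{i=1}^m L_i.$$

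The remaining step is the stable triviality of $\bar{\nu}$. I would argue this via parallelizability of the ambient polytope $P_0$: since $\psi$ is a smooth embedding of the contractible simple polytope $P_0$ into $\RR^m$, the rank-$n$ real bundle $T\psi(P_0)$ is trivial over the contractible base $\psi(P_0)$. Its restriction $T\psi(P)$ is thus trivial, and $T\psi(P) \oplus \nu_0 \cong \underline{\RR^m}$ then forces $[\nu_0] = 0$ in $\widetilde{KO}(\psi(P))$; pulling back gives $\bar{\nu}$ stably trivial on $M$. Adding a trivial bundle of appropriate (even) rank to both sides makes the right-hand side $\bigoplus L_i \oplus \underline{\CC^k}$ a complex vector bundle, exhibiting the sought stable complex structure on $TM$. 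The omniorientation enters by prescribing the signs of the characteristic vectors $\lambda_i$, which determine the embedding $T_K \hookrightarrow \TT^m$ and hence the complex structures on the $L_i$ (as opposed to their conjugates); by Remark \ref{signofchar} these sign choices do not alter the equivariant diffeomorphism type of $M$, so each omniorientation yields a well-defined stable complex structure via the above construction. The subtlest point is the parallelizability argument in the presence of holes, since $\psi(P)$ no longer lies in an affine subspace of $\RR^m$ as in the standard quasitoric case; the rescue is that $\psi$ smoothly extends to all of the convex polytope $P_0$, where triviality of $T\psi(P_0)$ is automatic.
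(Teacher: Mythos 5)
Your proposal is correct and follows essentially the same route as the paper: realize $M$ as $Z(P)/T_K$, use the contractibility of $P_0$ to (stably) trivialize the normal bundle of $\psi(P)\subset\RR^m$, and descend the complex-linear $T_K$-action on $T\CC^m|_{Z(P)}$ to the quotient. The only cosmetic difference is that the paper packages the descent via an Atiyah-type split exact sequence for the invariant bundle $(T\CC^m|_{Z(P)})^{T_K}$, whereas you write it as the associated line-bundle splitting $\bigoplus_{i=1}^m L_i$ --- a splitting the paper itself records immediately after its proof.
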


\begin{proof} Let $Q= \psi(P)$ and $Q_k = \psi(P_k)$ where $\psi$ is the
embedding of $P_0$ into $\RR^m$ constructed in section \ref{mac}.
the normal bundle of $Q_0$ in $\RR^m$ is trivial as $Q_0$ is
contractible. Therefore the normal bundle $N_Q$ of $Q$ is also
trivial. We may in fact identify $N_Q$ with a tubular neighborhood
of $Q$ in $\RR_{\ge}^m$ following an idea in \cite{BR}: Identify
$N_Q$ with $\{(x,v)| x \in Q, \, v \in (T_xQ)^{\perp} \subset
\RR^m \}$ where $\perp$ denotes orthogonal complement with respect
to the dot product. Then define the map $f: N_Q \to \RR_{\ge}^m$
by $f(x,v)=(e^{v_1}x_1, \ldots, e^{v_m}x_m) $ where the $x_i$s and
$v_i$s denote the coordinates of $x$ and $v$ respectively. Then a
careful analysis of the situation shows that $v \cdot Df_{(x,0)}
(v) = \sum_{i=1}^m v_i^2 x_i $ is positive. This shows that
$Df_x(N_Q)$ is transversal to $T_xQ$. We identify $N_Q$ with
$Df(N_Q)$.

 Since a tubular neighborhood of $Q$ in $\RR^m_{\ge}$ pulls back to
 a tubular neighborhood of $Z(P)$ in $\CC^m$ under $\pi_0$, we may
 identify  the normal bundle $N_Z$ of $Z(P)$ in $\CC^m$ with $\pi_0^{\ast} N_Q $.
 Therefore $N_Z$ is trivial. Let $N_M$ denote the
pullback of $N_Q$ to $M$ under $\psi \circ \pi$.
  Then by a slight generalization of the Atiyah sequence \cite{At},
   we obtain the following split
exact sequence of bundles,
\begin{equation}
0 \rightarrow \mathfrak{t}_K \times M \rightarrow (TZ(P)\oplus
N_Z)^{T_K} \rightarrow TM \oplus N_M \rightarrow 0.
\end{equation}
Here $\mathfrak{t}_K$ denotes the Lie algebra of $T_K$. Since the
action of $T_K$ on $T\CC^m$ is complex linear, therefore
$(TZ(P)\oplus N_Z)^{T_K} = (T\CC^m|_{Z(P)})^{T_K} $ inherits a
complex structure. It follows that $M$ admits a stable almost
complex structure.
\end{proof}

%Let $N_i$ be the normal bundle to the characteristic submanifold
%$X_i = \pi^{-1}(F_i)$. Suppose $F_i$ is a codimension one face of
%$P_j$. Then there exists a complex line bundle $\nu_i$ on the
%quasitoric manifold $M_j$ such that $\nu_i |_{X_i} = N_i  $ and
%$\nu_i$ is trivial away from $X_i$, see \cite{BR} or \cite{DJ}.
%The complex structure on $\nu_i$ agrees with the one on $N_i$
%determined by the omniorientation. We extend $\nu_i$ trivially to
%a complex line bundle on $M$, which we denote by the same symbol.

%Any point in $M$ has a neighborhood $U$ which can be thought of as
%belonging to one of the quasitoric manifolds $M_j$ that glue to
%produce $M$. By a well-known result on stable complex structures
%on quasitoric manifolds, $TU$ is stably isomorphic to $\bigoplus_k
%\nu_{j_k}|_U $ where the line bundles $\nu_{j_k}$s correspond to
%the characteristic submanifolds of $M_j$. We may assume that the
%rest of the $\nu_i$s are trivial on $U$. Therefore
%$\bigoplus_{i=i}^m \nu_i|_U \cong TU \oplus (U \times
%\RR^{2m-2n})$. Consequently we have,
%\begin{equation} \bigoplus_{i=i}^m \nu_i \cong TM \oplus (M \times \RR^{2m-2n}).
%\end{equation}
%Since the bundle $\bigoplus_{i=i}^m \nu_i$ is complex, the proof
%is complete.

As $T_K$ acts diagonally on $\CC^m$, the bundle $
(T\CC^m|_{Z(P)})^{T_K}$ splits naturally into a direct sum of $m$
complex line bundles over $M$, namely $\nu_1, \ldots, \nu_m$,
corresponding to the complex coordinate directions of $\CC^m$.
These directions correspond to (distance from) the facets of $P$.
Since the angular direction $u_i$ maps to $\lambda_i$ by
$\Lambda$, the bundle $\nu_i$ restricts to the normal bundle of
$X_i$ on the characteristic submanifold $X_i$. The total Chern
class of $M(P, \lambda)$ associated to the above stable complex
structure admits the following product decomposition,
\begin{equation}\label{chern}
c(TM)= \prod_{i=1}^m (1+ c_1(\nu_i)).
\end{equation}

Using standard localization formula or Theorem \ref{chiy}, we
obtain
\begin{equation}\label{cn}
c_n(TM) = \sum \sigma(v)
\end{equation} where the sum is over all vertices of $P$.

\begin{lemma}\label{eqiacs}
If $M(P,\lambda)$ admits a positive orientation and $\dim(M)=4$
then it admits an almost complex structure which is equivalent to
the associated stable almost complex structure.
\end{lemma}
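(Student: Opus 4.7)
The plan is to destabilize the stable almost complex structure $\tau$ on $TM$ given by Lemma~\ref{stable} to an honest almost complex structure $J$ on the rank-four bundle $TM$, using the classical Wu--Hirzebruch criterion: an oriented rank-four real bundle $\xi$ over a closed oriented $4$-manifold admits a compatible almost complex structure with first Chern class $c$ if and only if $c\equiv w_2(\xi)\pmod 2$ and $c^2-2e(\xi)=p_1(\xi)$.

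First I would compute the top Chern number of $\tau$. By~\eqref{chern} its total Chern class is $\prod_{i=1}^m(1+c_1(\nu_i))$, and \eqref{cn} gives $c_2(\tau)[M]=\sum_v\sigma(v)$. Positivity of the omniorientation forces $\sigma(v)=1$ for every vertex of $P$, so $c_2(\tau)[M]$ equals the total number of vertices of $P$. Since the $\TT^n$-fixed points of $M$ biject with the vertices of $P$, a Morse-theoretic argument using a generic circle subgroup (or, directly in dimension four, the rank computation in Corollary~\ref{homt5}) shows $\chi(M)$ equals this same number. As $H^4(M,\ZZ)\cong\ZZ$, the numerical equality upgrades to an equality of cohomology classes $c_2(\tau)=e(TM)$. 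Setting $c:=c_1(\tau)$, the congruence $c\equiv w_2(TM)\pmod 2$ holds because $\tau$ is a stable complex structure, and the identity $p_1=c_1^2-2c_2$ for a complex bundle, applied to $TM$ stabilized (whose $p_1$ agrees with $p_1(TM)$), gives $c_1(\tau)^2-2c_2(\tau)=p_1(TM)$. Combined with $c_2(\tau)=e(TM)$, this becomes $c^2-2e(TM)=p_1(TM)$, exactly the Wu--Hirzebruch condition. The criterion then produces an almost complex structure $J$ on $TM$ with $c_1(J)=c_1(\tau)$ and, automatically, $c_2(J)=e(TM)=c_2(\tau)$.

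Finally I would argue that $J$ and $\tau$ determine the same stable almost complex class. On a $4$-dimensional CW complex, two stable almost complex structures on a common oriented rank-four bundle are homotopic through stable structures if and only if their Chern classes $c_1\in H^2$ and $c_2\in H^4$ coincide; this follows from the Atiyah--Hirzebruch description of $[M,BU]$ in degrees $\le 4$ together with obstruction theory for lifts $BU\to BSO$. Since $J$ and $\tau$ agree on both Chern classes, they represent the same stable class. The main obstacle I expect is precisely this final equivalence: the Wu--Hirzebruch criterion readily yields existence of an almost complex structure with the prescribed $c_1$, but identifying its canonical stabilization with the specific one coming from the moment-angle construction of Section~\ref{mac} demands a careful handling of the low-degree obstructions, which ultimately reduce to comparing $c_1$ and $c_2$ as we have arranged.
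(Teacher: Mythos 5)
Your computational core is exactly the paper's: the paper's entire proof is to quote Theorem 1.7 of Thomas \cite{Th} — a stable complex structure on an oriented $4$-manifold destabilizes to an almost complex structure on $TM$, equivalent to the given stable one, if and only if $c_2 = e(TM)$ — and to verify $c_2(\tau)=e(TM)$ from \eqref{cn} and Corollary \ref{homt5}, i.e.\ $c_2(\tau)[M]=\sum_v\sigma(v)=\#\{\text{vertices}\}=\chi(M)$ under positivity. That part of your argument is correct and is the intended one (including the observation that $\chi(M)=1-s+(m+2s-2)-s+1=m$).

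The gap is in your final paragraph. The Wu--Hirzebruch criterion hands you \emph{some} $J$ on $TM$ with $c_1(J)=c_1(\tau)$, and you then need to identify its stabilization with $\tau$; for this you assert that stable almost complex structures on a fixed oriented bundle over a $4$-complex are classified up to homotopy by $(c_1,c_2)$. That is false in the presence of $H^3(M;\ZZ/2)\neq 0$: homotopy classes of lifts of $TM\colon M\to BSO$ through $BU\to BSO$ differ by classes in $H^i(M;\pi_i(SO/U))$, and $\pi_3(SO/U)\cong\ZZ/2$, so there is a potential $H^3(M;\ZZ/2)$-worth of stable structures sharing both Chern classes. This is not a vacuous worry here, since $H_3(M)\cong\ZZ^s$ with $s\ge 1$, so $H^3(M;\ZZ/2)\cong(\ZZ/2)^s\neq 0$. (Your appeal to $[M,BU]$ in low degrees only controls the isomorphism class of the stable complex \emph{bundle}, not the homotopy class of the stable complex \emph{structure} on the fixed real bundle $TM$, which is the notion of equivalence the lemma asserts.) The clean repair is to run the destabilization obstruction theory directly for the pair $SO(4)/U(2)\hookrightarrow SO(6)/U(3)$ (equivalently $S^2\hookrightarrow\CP^3$): the single obstruction to compressing the given lift $\tau$ lies in $H^4(M;\ZZ)$ and equals $e(TM)-c_2(\tau)$, which vanishes by your computation. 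This is precisely Thomas's Theorem 1.7, and it produces an almost complex structure that is equivalent to $\tau$ by construction, with no matching step needed.
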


\begin{proof} By Theorem 1.7 of \cite{Th}, the lemma holds if
 $c_2(TM)= e(TM)$. This follows from
  \eqref{cn} and Corollary \ref{homt5}.
\end{proof}

\begin{theorem}\label{acs}
The torus manifold $M(P, \lambda)$ admits a $\TT^n$-invariant
almost complex structure if and only if it has a positive
omniorientation.
\end{theorem}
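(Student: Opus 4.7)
\emph{Necessity.} Suppose $M$ carries a $\TT^n$-invariant almost complex structure $J$. Each characteristic submanifold $X_i$ is a connected component of the fixed set of the isotropy circle $G_{F_i}\subset\TT^n$, hence $J$-invariant since $J$ commutes with the torus action. Therefore $J|_{X_i}$ is itself an almost complex structure on $X_i$, yielding an orientation on $X_i$; together with the $J$-orientation of $M$ this defines an omniorientation. At any $\TT^n$-fixed point $v=F_{i_1}\cap\cdots\cap F_{i_n}$, the tangent space decomposes as $T_vM = N_1 \oplus \cdots \oplus N_n$ into the $J$-complex normal lines to the $X_{i_k}$, each being a $\TT^n$-weight space of weight $\pm\lambda_{i_k}$. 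Because $J$ is compatible with $TM|_{X_{i_k}} = TX_{i_k} \oplus N_k$, the $J$-orientation on each $N_k$ coincides with the orientation induced by the omniorientation. Summing, the direct sum orientation on $T_vM$ equals the $J$-orientation, which equals the given orientation of $M$; by the definition preceding \eqref{sign} this is precisely $\sigma(v)=+1$, so the omniorientation is positive.

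\emph{Sufficiency.} Given a positive omniorientation, I would construct the required structure via the moment angle picture of Section \ref{mac}. The standard complex structure $J_0$ on $\CC^m$ is $\TT^m$-invariant. The moment-angle embedding $\psi$ is arranged so that at each $z\in Z(P)$ the normal fiber $N_Z|_z$ coincides with $J_0V_z$, where $V_z$ denotes the tangent space to the $T_K$-orbit through $z$ (at interior points of $P$ this is immediate: $V_z$ is $K\otimes\RR$ in angular coordinates, the tangent to $\psi(P)$ is the image of $\Lambda^T$ in radial coordinates, and $J_0$ interchanges radial and angular). Consequently the orthogonal complement $W_z$ of $V_z \oplus J_0V_z$ in $T_z\CC^m$ lies inside $T_zZ(P)$, is $J_0$-invariant, and is horizontal with respect to the $T_K$-action. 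Under the quotient map $q:Z(P)\to M$, $W_z$ projects isomorphically to $T_{q(z)}M$, and pushing forward $J_0|_W$ defines an almost complex structure $\bar J$ on $TM$; $\TT^n$-invariance is inherited from the $\TT^m$-invariance of $J_0$. The positive omniorientation is used at each $\TT^n$-fixed point, via the sign formula \eqref{sign}, to confirm that $\bar J$ induces the preferred orientation of $M$ rather than its opposite.

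\emph{Main obstacle.} The most delicate step is verifying the identification $J_0V_z = N_Z|_z$ at all $z\in Z(P)$, especially those lying over the boundary strata of $P$ where the $\TT^m$-action has nontrivial isotropy and $Z(P)$ admits the local model $T_K\times\CC^k\times(\CC^{\ast})^{n-k}$. The verification requires a local-model computation exploiting the specific radial form of $\psi$ built from the distances $d_i$, and is also where the positivity hypothesis enters to align the induced orientation on $M$ with the preferred one.
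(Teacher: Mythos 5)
Your necessity argument is correct and is essentially the standard quasitoric argument that the paper cites from \cite{BP}: $J$ restricts to each characteristic submanifold, the induced omniorientation makes every $\sigma(v)=+1$, done.

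The sufficiency argument, however, has a fatal gap. Everything hinges on the claim that $N_Z|_z=J_0V_z$, equivalently that $W_z=(V_z\oplus J_0V_z)^{\perp}$ lies inside $T_zZ(P)$ and is complementary to $V_z$. At an interior point this amounts to requiring $T_x\psi(P)=(K\otimes\RR)^{\perp}=\mathrm{Im}\,\Lambda^{T}$ in the radial coordinates. That is false for the embedding actually constructed in section \ref{mac} (the functions $d_i$ are built from the auxiliary embedding $\psi_0$ into $\RR^{n+s}$ and bear no relation to $\lambda$), and it cannot be ``arranged'' in general: it would force $\psi(P)$ to sit inside an affine translate of $\mathrm{Im}\,\Lambda^{T}$ meeting the coordinate hyperplanes of $\RR^m_{\ge}$ according to the face structure of $P$, which is precisely a Delzant-type realization of $(P,\lambda)$ and does not exist for general characteristic pairs (certainly not for nonconvex $P$, and not even for all convex ones). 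The decisive symptom is that your construction never uses positivity of the omniorientation anywhere in building $\bar J$ --- positivity only appears at the end as an orientation check, which is not where the obstruction lives. If the construction worked as described it would produce a $\TT^n$-invariant almost complex structure on \emph{every} $M(P,\lambda)$; but, for example, the quasitoric manifold $\CC P^2\#\CC P^2$ over a square admits no almost complex structure at all ($1-b_1+b_2^{+}=3$ is odd), while its moment angle manifold and free $T_K$-action exist just as in your setup. So the step ``$W_z$ projects isomorphically and $J_0|_W$ descends'' must fail, and it does: $J_0V_z$ need not be transverse to $T_zZ(P)$, so $W_z$ need not lie in $T_zZ(P)$ at all.

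The paper's route is different and avoids this entirely: it decomposes $M$ as a fiber sum of the quasitoric pieces $M_0,\dots,M_s$, invokes Kustarev's theorem \cite{Kus} to obtain a $\TT^n$-invariant orthogonal almost complex structure on each $M_k$ (this is exactly where positive omniorientation is essential --- it is the hypothesis of Kustarev's theorem), makes each $J_k$ locally constant in the radial direction near the principal orbit $L_k$, and then interpolates between $\widetilde J_0$ and $\widetilde J_1$ across the gluing annulus using the path-connectedness of $GL^{+}(2n,\RR)/GL(n,\CC)$, extending equivariantly over $\TT^n$. If you want to salvage a moment-angle approach you would have to explain how positivity enters the pointwise linear algebra of $T_z\CC^m$, and at present it does not.
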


\begin{proof} The necessity of positive omniorientation for
existence of $\TT^n$-invariant almost complex structure follows
from similar argument as in quasitoric case, see \cite{BP}.

 To prove sufficiency, first assume that the number of holes is one.
 Note that a positive omniorientation of $M(P,\lambda)$ induces
 positive omniorientation  on $M_0$ and $M_1$. Then by the work of
  Kustarev \cite{Kus}, there exist $\TT^n$-invariant orthogonal almost
complex structures $J_k $ on $M_k$, $ k=0,1$. In particular, these
structures are orientation preserving. We may assume that the
complex structure $J_k$ is locally constant in the normal
direction near $L_k$, as explained below.

 Recall the
orientation preserving
 diffeomorphisms $f_k$ in \eqref{fk}. Since
$T( \TT^n  \times B)$ is trivial, $df_k$ defines an isomorphism
\begin{equation} df_k : TU_k  \to  \TT^n \times B \times \RR^{2n}.
\end{equation}
Consider the almost complex structures \begin{equation}
\widehat{J}_k = df_k \circ J_k \circ df_k^{-1}
\end{equation} on $  \TT^n \times B \times \RR^{2n}$.
 Choose a smooth non-decreasing function
  $\gamma: \RR \to \RR $ such that
\begin{equation} \gamma(t) = \left\{ \begin{array}{ll}
  0 & {\rm if} \; t \leq  \epsilon_1 \\
  t & {\rm if} \; t \geq   \epsilon_2  \\
                             \end{array} \right.
\end{equation} where $0 < \epsilon_1 <\epsilon_2 < 1 $ are small
real numbers. Define \begin{equation}
 {J}_k^{\prime} ({\bf q}, r, \Theta ) =
                             \widehat{J}_k ({\bf q}, \gamma(r),
                              \Theta ).
                              \end{equation}
 Replace $J_k$ by $df_k^{-1}  {J}_k^{\prime} df_k $ on $U_k$.
 Denote the resulting almost complex structure on $M_k$ by $J_k$
 without confusion. Note that these new almost complex structures
 are orientation preserving and $\TT^n$-invariant.

Recall the orientation preserving diffeomorphism
  $g$ in \eqref{g}. Define
\begin{equation}\label{phi}
\phi_0 := f_0 , \quad \phi_1 :=  ( Id \times g) \circ f_1 :
U_1-L_1 \to   \TT^n  \times B^{-}.
\end{equation}
 We have orientation preserving isomorphisms,
\begin{equation} d\phi_k : T(U_k -L_k) \to \TT^n  \times B^{-}
\times \RR^{2n}.
\end{equation}
Consider the almost complex structures \begin{equation}
\widetilde{J}_k = d\phi_k \circ J_k \circ d\phi_k^{-1}
\end{equation} on $\TT^n  \times B^{-} \times \RR^{2n}$.
The space of orientation preserving almost complex structures on
$\RR^{2n}$ may be identified with $GL^{+}(2n,\RR)/GL(n,\CC)$.
 Since $\phi_k$ is
orientation preserving, we can regard $\widetilde{J}_k$ as a map
\begin{equation} \widetilde{J}_k : \TT^n  \times B^{-} \to GL^{+}(2n,\RR)/GL(n,\CC).
\end{equation}
Since $J_k$ is locally constant in the normal direction near
$L_k$, we may define \begin{equation} \widetilde{J}_0( {\bf q},
0,\Theta) = \widetilde{J}_0( {\bf q}, \epsilon_1/2,\Theta ), \quad
 \widetilde{J}_1( {\bf q}, 1,\Theta) := \widetilde{J}({\bf q},
  1 - \epsilon_1/2, \Theta ). \end{equation}
 The space $ GL^{+}(2n,\RR)/GL(n,\CC)  $ is path connected. Hence there exists a smooth
  path \begin{equation} F(t): [0.4, 0.6] \to
 GL^{+}(2n,\RR)/GL(n,\CC), \; F(0.4) = \widetilde{J}_1 ( {\bf 1}, 1, \Theta ),
 \; F(0.6) = \widetilde{J}_0 ( {\bf 1}, 0, \Theta ).
  \end{equation}
  By $\TT^n$-invariance, we construct a smooth family of paths
 $ F( {\bf q}, t ):  \TT^n   \times  [0.4, 0.6] \to
 GL^{+}(2n,\RR)/GL(n,\CC)$,
 \begin{equation}  F({\bf q}, t) := d{\bf q} F(t) d{\bf q}^{-1},
 \end{equation}
 satisfying $ F({\bf q}, 0.4) = \widetilde{J}_1 ( {\bf q},  1, \Theta  ),
 \; F({\bf q}, 0.6) = \widetilde{J}_0 ({\bf q}, 0, \Theta ).$

\noindent Choose a smooth non-decreasing function $\alpha: (0,1)
\to [0,1) $ such that
\begin{equation} \alpha(t) = \left\{ \begin{array}{ll}
                              t & {\rm if} \; t \geq 0.8 \\
                              0 & {\rm if} \; t \leq 0.6.
                              \end{array} \right.
\end{equation}
Choose another smooth non-decreasing function
  $\beta: (0,1) \to (0,1] $ such
that
\begin{equation} \beta(t) = \left\{ \begin{array}{ll}
                              t & {\rm if} \; t \leq 0.2 \\
                              1 & {\rm if} \; t \geq 0.4.
                              \end{array} \right.
\end{equation}
Define a map $\widetilde{J}: \TT^n   \times B^{-}  \to
GL^{+}(2n,\RR)/GL(n,\CC) $ by
\begin{equation} \widetilde{J} ( {\bf q}, r, \Theta) = \left\{ \begin{array}{ll}
                              \widetilde{J}_0 ( {\bf q}, \alpha(r), \Theta ) & {\rm if} \; r> 0.6 \\
                              F( {\bf q}, r ) & 0.6 \geq r \geq 0.4 \\
                               \widetilde{J}_1 ( {\bf q}, \beta(r), \Theta )  & {\rm if} \; r< 0.4.
                              \end{array} \right.
\end{equation}
Note that
\begin{equation} \widetilde{J} ({\bf q}, r, \Theta) = \left\{ \begin{array}{ll}
                              \widetilde{J}_0 ( {\bf q}, r, \Theta ) & {\rm if} \; r> 0.8 \\
                               \widetilde{J}_1 ( {\bf q}, r, \Theta )  & {\rm if} \; r<
                               0.2.
                              \end{array} \right.
\end{equation}
Define a $\TT^n$-invariant almost complex structure $\bar{J}_k$ on
 $T(U_k -L_k) $ by
\begin{equation} \bar{J}_k  =  d\phi_k^{-1} \circ \widetilde{J} \circ  d\phi_k.
\end{equation}
By construction, $\bar{J}_k$ agrees with $J_k$ in a neighborhood
of the outer boundary of $U_k - L_k$. Therefore $\bar{J}_k$
extends to a $\TT^{n}$-invariant almost complex structure on $M_k
- L_k$. Moreover $\bar{J}_0 \circ dh = dh \circ \bar{J}_1 $ on
$U_1 - L_1$ since $h = \phi_0^{-1} \circ \phi_1$, see \eqref{h}
and \eqref{phi}. Therefore $\bar{J}_0$ and $\bar{J}_1$ glue to
produce a $\TT^{n}$-invariant almost complex structure $\bar{J}$
on $M$. Finally, note that we may apply induction when the number
of holes is greater than one.
\end{proof}

\begin{remark}\label{gkacs} In dimension four, the sufficiency
part of Theorem \ref{acs} also follows from section 13 of
\cite{GK} together with the main theorem of \cite{Kus}.
\end{remark}

\subsection{The $\chi_y$ genus} The Hirzebruch $\chi_y$ genus is an
invariant of the complex cobordism class of the manifold and thus
depends on the stable almost complex structure. We give a
combinatorial formula of the $\chi_y$ genus of $M$, following
Panov's work on quasitoric manifolds. The proofs are the same as
in \cite{Pan}.

 Let $E$ be an edge of $P^n$. The
isotropy subgroup of $\pi^{-1}(E)$ is an $(n-1)$-dimensional torus
generated by a  submodule $K$ of rank $(n-1)$ in $\ZZ^n$. A
primitive vector $\mu$ in $(\ZZ^n)^{\ast}$ is called an edge
vector corresponding to $E$ if $\mu(\alpha)= 0 $ for each $\alpha
\in K$. The edge vector of $E$ is therefore unique up to sign.

Let $\nu$ be a primitive vector in $\ZZ^n$ such that
\begin{equation}\label{nu} \mu (\nu) \neq 0 \; {\rm for\; any \;
edge\; vector\; \mu}. \end{equation}
 Then the circle $ S_{\nu}^1 = (\ZZ<\nu> \otimes \RR)/\ZZ<\nu>  $
  acts smoothly on $M$ with only isolated
fixed points corresponding to the vertices of $P$.

We choose signs for each edge vector at a vertex $v$ according to
the characteristic omniorientation as follows. Order the
codimension one faces meeting at $v$ and corresponding edges
$E_k$s as in subsection \ref{omni}. Let $\mu_k$ be an edge vector
corresponding to $E_k$. Let $M_{(v)}$ be the matrix, $M_{(v)}=
[\mu_1, \ldots, \mu_k] $. Then choose sign for each $\mu_k$ such
that $M_{(v)}^t \Lambda_{(v)}= I_{n} $. Under this choice of signs
the action of $S^1_{\nu}$ induces a  representation of $S^1$ on
the tangent space $T_v M$ with weights $\mu_1(\nu), \ldots,
\mu_n(\nu)$.

\begin{defn}   Define the index
of a vertex $v \in P$ as the number of negative weights of the
$S^1$ representation on $T_v(M)$,
$$ {\rm ind}_{\nu}(v) = |\{k: \mu_k(\nu) < 0 \} |. $$
\end{defn}

\begin{theorem}\label{chiy} For any vector $\nu$ satisfying \eqref{nu},
$$ \chi_y (M ) = \sum_v (-y )^{{\rm ind}_{\nu}(v)} \sigma(v). $$
\end{theorem}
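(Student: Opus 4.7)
The plan is to derive the formula by applying the Atiyah--Bott--Berline--Vergne localization theorem to the action of $S^1_\nu \subset \TT^n$ on $M$. The Hirzebruch $\chi_y$ genus is a cobordism invariant of the stable almost complex structure supplied by Lemma \ref{stable}, whose total Chern class is described by \eqref{chern}. Assumption \eqref{nu} guarantees that the fixed points of $S^1_\nu$ coincide with the $\TT^n$-fixed points and hence with the vertices of $P$. Localization therefore represents $\chi_y(M)$ as a sum of local contributions, one per vertex, following the template of \cite{Pan} in the quasitoric setting.

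At a vertex $v = F_{i_1} \cap \cdots \cap F_{i_n}$, the $\TT^n$-representation on the weight spaces of $T_vM$ is read off from the edges meeting $v$, with characters $\mu_1, \ldots, \mu_n$ normalized by the convention $M_{(v)}^t \Lambda_{(v)} = I_n$. Inserting this data into the standard localization expression for $\chi_y$ at isolated fixed points yields
\[
 \chi_y(M) \;=\; \sum_v \sigma(v) \prod_{k=1}^{n} \frac{1 + y\, e^{-\mu_k(\nu)\, t}}{1 - e^{-\mu_k(\nu)\, t}},
\]
where $t$ generates $H^2_{S^1}(\mathrm{pt})$, and $\sigma(v)$ records the sign discrepancy between the orientation on $T_vM$ coming from the ordered stable-complex weight decomposition and the preferred orientation of $M$ (which is $\det \Lambda_{(v)} = \sigma(v)$ by \eqref{sign}).

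The left hand side is $t$-independent, so one may let $u := e^{-t} \to 0$. For each $k$, the factor $\frac{1 + y u^{\mu_k(\nu)}}{1 - u^{\mu_k(\nu)}}$ tends to $1$ when $\mu_k(\nu) > 0$ and, after rewriting as $\frac{u^{-\mu_k(\nu)} + y}{u^{-\mu_k(\nu)} - 1}$, tends to $-y$ when $\mu_k(\nu) < 0$; by \eqref{nu} neither case is vacuous. The product over $k$ therefore collapses to $(-y)^{\mathrm{ind}_\nu(v)}$, and summing over vertices yields the claimed identity.

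The main obstacle is the careful sign bookkeeping: one must check that the orientation determined by the ordered complex weight spaces of the stable almost complex structure at $v$ agrees with the preferred orientation of $M$ exactly up to the factor $\sigma(v)$. This uses the relation $M_{(v)}^t \Lambda_{(v)} = I_n$ together with the description of each normal bundle $\nu_{i_k}|_{X_{i_k}}$ furnished by the characteristic omniorientation in subsection \ref{omni}, and is where the $\sigma(v)$ in the local term genuinely enters the localization formula. Once this identification is in hand, the remainder of the argument is essentially mechanical and parallels Panov's quasitoric computation; the presence of holes in $P$ plays no role since every term depends only on vertex data.
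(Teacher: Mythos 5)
Your proposal is correct and follows essentially the route the paper takes: the paper's proof consists of the remark that ``the proofs are the same as in \cite{Pan}'', and Panov's argument is precisely the $S^1_\nu$-localization of the equivariant $\chi_y$ genus at the isolated fixed points, followed by the limit $e^{-t}\to 0$ that collapses each local factor to $(-y)^{\mathrm{ind}_\nu(v)}\sigma(v)$, with the sign bookkeeping governed by $M_{(v)}^t\Lambda_{(v)}=I_n$ and \eqref{sign}. You have simply written out the details that the paper delegates to \cite{Pan} (the paper also notes the alternative derivation from Lemma \ref{stable} and Theorem 10.1 of \cite{HM}).
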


Note that the Theorem \ref{chiy} also follows from
% the Kosniowski formula or
Lemma \ref{stable} together with Theorem 10.1 of \cite{HM}.
Specializing the formula in Theorem \ref{chiy} to $y= -1$ and $y=
1 $, respectively yield formulas for the top Chern number and the
signature. Moreover following Theorem 3.4 of \cite{Pan} or Theorem
4.2 of \cite{Mas} we obtain the following formula for Todd genus
of $M$,
\begin{equation}\label{todd}
 {\rm td}(M) = \sum_{{\rm ind}_{\nu}(v)=0} \sigma(v).
\end{equation}

\subsection{Integrability questions}\label{iq}

\begin{lemma}\label{symp1} If the polytope $P$ has at least one
hole,
then the torus manifold $M(P, \lambda)$ does not support any
symplectic form for which the torus action is symplectic.
\end{lemma}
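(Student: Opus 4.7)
The plan is to argue by contradiction. Suppose $M=M(P,\lambda)$ carries a symplectic form $\omega$ for which the $\TT^n$-action is symplectic; the strategy is to show this forces the orbit space $P$ to be homeomorphic to a convex polytope, which is impossible once $P$ has a hole.

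The first step is to upgrade ``symplectic action'' to ``Hamiltonian action''. Since $M$ is compact and the $\TT^n$-action has isolated fixed points (the preimages of the vertices of $P$), one may invoke the half-dimensional torus analogue of the Frankel--McDuff theorem: a symplectic $\TT^n$-action on a compact symplectic $2n$-manifold with a fixed point is Hamiltonian. (Via equivariant Darboux near a fixed point one obtains local moment maps, and in this half-dimensional setting the cohomological obstruction to patching them globally vanishes because of the presence of a fixed point.) This produces a moment map $\mu: M \to (\mf{t}^n)^{\ast}\cong\RR^n$.

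Next, the Atiyah--Guillemin--Sternberg convexity theorem identifies $\Delta := \mu(M)$ as a convex polytope. Because the action is locally standard (Section~\ref{def}) and half-dimensional, Delzant's local normal form holds at each fixed point, and the induced map $\bar{\mu}:M/\TT^n\to\Delta$ is a homeomorphism (connectedness of fibers plus the normal form). Combined with the identification $M/\TT^n\cong P$ from Section~\ref{def}, this would give a homeomorphism $P\cong\Delta$, so $P$ would have to be homeomorphic to a convex polytope.

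The contradiction is then purely topological: a convex $n$-polytope is homeomorphic to $B^n$ and in particular contractible, whereas $P = P_0\setminus\bigcup_{k=1}^{s}P_k^{\circ}$ with $s\ge 1$ is homotopy equivalent to $\bigvee_{k=1}^{s}S^{n-1}$, so $H_{n-1}(P;\ZZ)\neq 0$. This is incompatible, ruling out such an $\omega$. The main obstacle is the first step: the implication ``symplectic $\Rightarrow$ Hamiltonian'' for higher-rank torus actions is delicate in general, but in the half-dimensional setting with fixed points it is essentially standard, and once it is in hand the remaining ingredients — AGS convexity, the Delzant identification of the orbit space with the moment polytope, and the nonvanishing of $H_{n-1}(P)$ — are routine.
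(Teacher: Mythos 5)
Your overall architecture coincides with the paper's: force the action to be Hamiltonian, invoke convexity/Delzant to identify the orbit space with a convex polytope, and contradict the presence of holes. The gap is exactly in the step you yourself flag as the main obstacle. The assertion that a symplectic $\TT^n$-action on a compact symplectic $2n$-manifold with a fixed point is Hamiltonian is in fact a true theorem (Giacobbe; Duistermaat--Pelayo, in the classification of symplectic torus actions with coisotropic principal orbits), but it is not ``essentially standard,'' and the mechanism you sketch for it does not work: the obstruction to globalizing local moment maps is the class $[\iota_{X_M}\omega]\in H^1(M;\RR)$, and the mere presence of a fixed point does not kill it --- McDuff's six-dimensional example is a symplectic circle action with nonempty fixed point set that is not Hamiltonian. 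What a fixed point buys you cheaply is only that all orbits are isotropic (for a symplectic torus action $\omega(X_M,Y_M)$ is locally constant, and it vanishes at a fixed point); passing from ``Lagrangian principal orbits plus a fixed point'' to ``Hamiltonian'' is the hard content of the cited theorem, not a patching argument. As written, your first step needs either a precise citation to that result or a different proof.

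The paper sidesteps the issue by a dimension split: for $2n>4$ it observes that $M(P,\lambda)$ is simply connected, so $H^1(M;\RR)=0$ and every symplectic circle action is automatically Hamiltonian (no fixed-point hypothesis needed at all); for $2n=4$, where $b_1(M)=s\neq 0$, it invokes McDuff's theorem that a symplectic circle action on a compact four-manifold is Hamiltonian if and only if it has fixed points, and applies it to a basis of circle subgroups. Either repair --- citing the half-dimensional theorem properly, or adopting the paper's case division --- makes your argument work; the remaining steps (Atiyah--Guillemin--Sternberg convexity, identification of the orbit space with the moment polytope, and the nonvanishing of $H_{n-1}(P)$ when $s\ge 1$) are fine.
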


\begin{proof}
When the dimension $2n > 4$,  $M(P, \lambda)$ is simply connected.
So any symplectic circle action is Hamiltonian. Therefore if $M(P,
\lambda)$ supports a $T^n$-invariant symplectic form, then the
action of $T^n$ must be Hamiltonian. Then $M(P, \lambda)$ would be
a symplectic toric manifold  with a moment map whose image is a
Delzant polytope. Then the orbit space of the $T^n$-action on
$M(P, \lambda)$ would be a Delzant polytope, see Theorem 2.6.2 of
\cite{Sil}. Therefore, as the orbit space of $M(P, \lambda)$ is
not convex it cannot support an invariant symplectic form.

When $2n=4$, a result of McDuff \cite{McD} states that a
symplectic circle action on a compact four dimensional manifold is
Hamiltonian if and only if it has fixed points. Therefore, again,
 if $M(P,\lambda)$ supports a $T^n$-invariant symplectic form, then the
action of $T^n$ must be Hamiltonian. We get a contradiction as
above.
\end{proof}

 It follows from the main result of \cite{IK} that $M(P,
\lambda)$ cannot admit a complex structure with respect to which
the torus action is holomorphic if it is not a toric variety, for
instance when $P$ has at least one hole.

More generally, we may ask whether $M(P,\lambda)$ admits any
symplectic or complex structure. We do not know of any example
that does so in case $P$ has at least one hole.

\begin{lemma}\label{kahler} If $P$ is a $2$-polytope with an odd number of holes, 
then $M(P,\lambda)$ can not be Kahler.
\end{lemma}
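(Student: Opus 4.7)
The plan is to apply the Hodge-theoretic parity constraint on compact Kähler manifolds: every odd-degree Betti number $b_{2k-1}$ must be even, since the Hodge decomposition gives $b_{2k-1} = 2 \sum_{p+q=2k-1,\, p<q} h^{p,q}$. The strategy is then to exhibit an odd-degree Betti number of $M(P,\lambda)$ that is odd, yielding a contradiction.

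In dimension four the argument is immediate. By Corollary \ref{homt5} applied with $s = 1$, the first Betti number satisfies $b_1(M(P,\lambda)) = 1$, which is odd. This contradicts the Kähler parity constraint, so $M$ cannot be Kähler. This is the principal case of interest in the paper, since the explicit homology computation of Section \ref{homology} is carried out in dimension four.

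For higher dimensions $2n$ with $n \geq 3$ one cannot argue via $b_1$: the fiber sum decomposition $M = M_0 \#_{\TT^n} M_1$ of Lemma \ref{smooth}, together with van Kampen applied to $M = (M_0 - L_0) \cup (M_1 - L_1)$, forces $\pi_1(M) = 1$, since the deleted orbits $L_i \cong \TT^n$ have codimension $n \geq 3$ in the simply-connected quasitoric halves $M_i$. In this regime I would instead run the Mayer-Vietoris sequence for the decomposition along a neighborhood $\sim \TT^n \times S^{n-1}$ of the seam, exploiting the fact that the orbit space $P = P_0 - P_1^\circ$ deformation retracts onto $\partial P_1 \cong S^{n-1}$; pulled back through the orbit map, this should contribute a surviving class of degree $n-1$, whose parity (odd when $n$ is even) must be compared against the vanishing of odd cohomology on the quasitoric halves.

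The main obstacle is precisely this higher-dimensional analysis: tracking the interaction between the class of the principal orbit $[L_i]$ in each quasitoric piece and the $S^{n-1}$-factor of the seam in the Mayer-Vietoris differential is delicate, and controlling the parity of the resulting odd-degree Betti numbers is not automatic. By contrast, the four-dimensional case, which is the natural setting given the paper's homological computations, follows cleanly from Corollary \ref{homt5}.
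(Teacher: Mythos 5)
Your dimension-four argument is essentially the paper's proof: both hinge on $b_1(M)=1$ (Theorem \ref{homt4}, or Corollary \ref{homt5} with $s=1$), the paper concluding via Gompf's appeal to the Enriques--Kodaira classification of surfaces while you use the more elementary Hodge-theoretic fact that odd Betti numbers of a compact K\"ahler manifold are even; either suffices for the stated conclusion. Your concern about dimensions $2n>4$ is legitimate but goes beyond what the paper establishes: its proof likewise only covers the four-dimensional case (it relies on Theorem \ref{homt4}, stated for $2$-polytopes, and on the classification of complex surfaces), and since $M$ is simply connected for $2n>4$ no $b_1$ argument is available there, so your incomplete Mayer--Vietoris sketch is not needed to match the paper.
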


\begin{proof}
 If $P$ has $s$
holes, by Corollary \ref{homt5}, the first Betti number $b_1(M)= s$.
But for a compact Kahler manifold, the Betti numbers of odd degree are 
even (see \cite{GH}, page 117). The result follows.
\end{proof}

 It is not hard to produce
examples of $M$ that admit almost complex structure but do not
admit an integrable complex structure. For an almost complex
$4$-manifold, $c_1^2$ and $c_2$ are determined by the Euler
characteristic and signature, and are therefore independent of the
choice of almost complex structure. Consider the equivariant
connected sum $Y$ of three copies of $\CC P^2$. This is a
quasitoric manifold with a pentagon as $P$. The characteristic
vectors may be chosen to be $(1,0)$, $(-1,1 )$, $(1,-2)$, $(0,1)$
and $(-1,-1)$, thus endowing $Y$ with a positive omniorientation
and an almost complex structure. However, $Y$ has $c_1^2 = 19 $
and $c_2=5$. Therefore the Bogomolov-Miyaoka-Yau inequality,
$c_1^2 \le 3c_2$, is not satisfied and $Y$ does not admit a
complex structure. It may be argued using \eqref{chern} and
intersection theory that $c_1^2$ and $c_2$ are additive with
respect to the fiber sum operation. Therefore the fiber sum of any
finite number of copies of $Y$ produces an almost complex manifold
which does not admit a complex structure. Since $c_1^2= 3c_2$ for
$\CC P^2$, the fiber sum of copies of $Y$ and $\CC P^2$ also
yields such examples.

%\subsection{Examples}

%Consider the manifolds obtained by symplectic fiber sum of four
%dimensional toric manifolds.   Hence all of them obey the
%Bogomolov-Miyaoka-Yau inequality: $c_1^2 \le 3c_2$.

%So we have to
%use further details from the Enrique-Kodaira classification of
%surfaces (see \cite{BPV}) to exhibit one that is not complex.

%\begin{figure}[ht]
 %       \centerline{
  %         \scalebox{0.70}{
  %          \input{ega4.pstex_t}
   %         }
    %      }
  % \caption {Some symplectic but non-complex torus manifolds}
   %\label{figeg04}
   %\end{figure}

% For example, suppose $M$ is obtained by fiber summing $a$
%copies of ${\bf P}^2$ with $b$ many Hirzebruch surfaces. A
%Hirzebruch surface has characteristic vectors
%$(1,0),\,(0,1),\,(-1,k)$ and $(0,-1)$, in that order. One may
%verify using remark \ref{int} that any such complex surface has
%$c_1^2 = 8$, whereas $c_2 = 4$. Therefore $c_1(M)^2 = 9a + 8b$ and
%$c_2(M) = 3a + 4b$.

%Now consider the case when there is only one hole, i.e. $a+ b =2$.
%Then by Theorem \ref{homt4}, the first Betti number $b_1(M)= 1$.
%As Gompf points out in p. 560 of \cite{Gom}, it follows from the
%the Enrique-Kodaira classification that no $4$-dimensional
%symplectic manifold with  $b_1=1$ can be homotopy equivalent to a
%complex surface.
 %Therefore none of these manifolds are complex if
%$a+ b =2$.
\bigskip

{\bf ACKNOWLEDGEMENT.} It is a pleasure to thank Shengda Hu for
extensive discussions. We also thank Andres Angel, Saibal Ganguli,
Mikhail Malakhaltsev, Taras Panov and Dong Youp Suh for helpful
conversations. We thank Mikiya Masuda for comments that helped us
improve our exposition. We thank Yael Karshon
 and the referee for pointing out two different serious
errors in earlier drafts of the article. We also thank the referee for
 suggesting numerous improvements. The first
author was partially supported by the Proyecto de investigaciones
grant from Universidad de los Andes. The second author was
partially supported by the National Research Foundation of Korea
(NRF) grant funded by the Korea government (MEST) (No.
2012-0000795).

\renewcommand{\refname}{References}

\vspace{1cm}

\vfill

\end{document}